\documentclass[11pt, final]{amsart}
\usepackage{graphicx}
\usepackage{latexsym}
\usepackage{amsfonts,amsmath,amssymb}
\usepackage{url}
\usepackage{tikz}
\usepackage{float}
\usepackage[indent]{parskip}
\usepackage{xcolor}
\usepackage{ifdraft}
\ifoptionfinal{}{\pagecolor[HTML]{FDF6E3}}

\newtheorem{theorem}{Theorem}[section]								
\newtheorem{lemma}[theorem]{Lemma}
\newtheorem{definition}[theorem]{Definition}
\newtheorem{corollary}[theorem]{Corollary}

\newtheorem{conjecture}[theorem]{Conjecture}

\usepackage{color}

\def\part{\partial}

\def\b1{\bold 1}

\newcommand{\beq}{\begin{equation}}
\newcommand{\eeq}{\end{equation}}

\theoremstyle{remark}

\numberwithin{equation}{section}
\date{\today}

\begin{document}

\title[Fixed number of occurrences of a pattern]{Permutations with a fixed number of occurrences of a pattern: a case generalizing 231}
\author {Michael Waite}
\address{Department of Mathematics, University of Florida, Gainesville, FL 32601}
\email{michael.waite@ufl.edu}

\begin{abstract} 
We determine a set of permutation patterns $q$ so that the number of permutations with $r$ occurrences of $q$ is asymptotically $n^r$ times the number of permutations avoiding $q$, partially settling a conjecture of Conway and Guttmann. We also use these asymptotics to prove nonrationality and nonalgebraicity for certain ordinary generating functions for permutations with $r$ copies of a pattern.
\end{abstract}

\maketitle

\section{Introduction}
We say that a permutation $p$ contains a pattern $q = q_1 ... q_k$ if there is a subsequence $n_1, ..., n_k$ so that $p_{n_i} < p_{n_j}$ if and only if $q_i < q_j$. We say that $p$ contains $r$ copies of $q$ if there are $r$ different such subsequences $n_1, ..., n_k$.

We denote the set of permutations of length $n$ that avoid $q$ as $S_n(q)$, and we denote the set of permutations with $r$ copies of $q$ as $S_{n,r}(q)$.

It is well-known that $|S_n(321)| = |S_n(231)|  = \frac{1}{n+1} {2n \choose n}$, which is the $n$th Catalan number. However, it is not true that $|S_{n,r}(321)| = |S_{n,r}(231)|$ for $r\geq 1$. It is known due to Noonan \cite{Noonan} that $|S_{n,1}(321)| = \frac{3}{n} { 2n \choose n+3 }$ and it is known due to Bóna \cite{Bona1998} that $|S_{n,1}(231)| = {2n-3 \choose n-3 } $.

Enumerations of $|S_{n,r}(q)|$ have been done for small values of $r$ for the length $3$ patterns, but a perhaps more interesting question is what happens as we vary $r$. It is known due to Bóna \cite{Bona1997a} that there is some constant $C$ so that $|S_{n,r}(231)| \sim C \frac{4^n} {n^{3/2 - r}}$, and the present author \cite{Waite2025} has recently shown that $|S_{n,r}(321)| = \Theta( \frac{4^n} {n^{3/2}})$, and moreover that for any permutation $q$ it is true that $|S_{n,r}(321 \ominus q)| = \Theta (|S_{n}(321 \ominus q)|)$.

Recently, Conway and Guttmann \cite{Conway} have found substantial numerical evidence for the asymptotics of $|S_{n,r}(q)|$ where $q$ is of length $4$. For five of the seven different classes, they have conjectured that $|S_{n,r}(q)| \sim C n^r |S_{n}(q)|$

In this paper, we will find a set of patterns so that there is a constant $C$ so that $|S_{n,r}(q)| = \Theta(n^r |S_{n}(q)|)$, as is known to be true for $q = 231$.

In Section 2, we will construct an injection that gives us the desired upper bound, and in Section $3$, we will construct an injection that gives us the desired lower bound. In Section $4$, we state the main result of the paper, combining the necessary conditions on the pattern $q$ from the upper bound and from the lower bound. In Section $5$ we will apply these results for patterns where the asymptotics are known, and obtain new results on the nonrationality and nonalgebraicity of the generating functions for permutations with $r$ copies of these patterns. Finally, in Section $6$ we return to the conjecture of Conway and Guttmann \cite{Conway} and discuss which cases are remaining.

\section{A Lower Bound}

First, we need a definition.
\begin{definition}
    Suppose $n\geq k$. Let $\mathcal{I}_{n,k}$ denote the set of size $k$ subsets of the set $[n] = \{1,2, ..., n \}$.
\end{definition}
Now, we will state and prove the main result of this section.
\begin{theorem}
    Let $q$ be a permutation of length $m$ which is indecomposable, and is of the form $q_1 \ominus ... \ominus q_k$ for some $k \geq 2$ where all of the $q_i$ are skew-indecomposable and $q_2, ..., q_{k-1}$ are not $1$. Let $q_1$ be of length $\ell$.  Suppose $r \geq 1$. Then there is an injection
    \[
    \phi: \mathcal{I}_{n-rm-\ell+1, r} \times S_{n - rm - \ell}(q) \hookrightarrow S_{n,r}(q)
    \]
    for $n\geq rm$.

\end{theorem}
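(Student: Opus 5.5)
The plan is to build $\phi$ by inserting $r$ disjoint copies of $q$, plus one auxiliary copy of $q_1$, into a $q$-avoiding permutation $p\in S_{n-rm-\ell}(q)$, with the chosen subset $A=\{a_1<\dots<a_r\}\in\mathcal{I}_{n-rm-\ell+1,r}$ recording where the copies go. The count works: we add $rm+\ell$ entries to a permutation of length $n-rm-\ell$. The $n-rm-\ell+1$ elements of the ground set of $\mathcal{I}$ correspond to the $n-rm-\ell+1$ gaps of $p$. The copy of $q_1$ is there to make the inserted blocks identifiable and to block unwanted patterns.

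The basic tool is the following observation. Because $q$ is indecomposable (not a direct sum), every occurrence of $q$ in a direct sum $\sigma\oplus\tau$ lies entirely inside $\sigma$ or entirely inside $\tau$. Hence the number of copies of $q$ is additive over direct sums, and $q\oplus q\oplus\cdots\oplus q$ ($r$ summands) has exactly $r$ copies of $q$.

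Concretely, I would proceed in three steps.
\begin{enumerate}
\item Write $p$ with $n-rm-\ell+1$ gaps. In the gap indexed by $a_j$, insert a block $B_j$ order-isomorphic to $q$. Choose the values so that $B_1,\dots,B_r$ form the chain $q\oplus\cdots\oplus q$: each $B_j$ lies entirely above $B_{j-1}$, and all of them lie above every entry of $p$.
\item Place the auxiliary copy of $q_1$ at the very front. Give it the top values, so that the resulting permutation is $q_1\ominus(\text{rest})$.
\item Check that the result lies in $S_{n,r}(q)$, and check injectivity.
\end{enumerate}

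For injectivity, I would argue that the blocks $B_j$ are exactly the $r$ copies of $q$. Once these copies are located, removing them together with the prefix $q_1$ recovers $p$. The positions where they sat, measured in gaps of $p$, recover $A$.

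The main obstacle is showing that no additional copies of $q$ arise: copies mixing entries of $p$ with entries of the $B_j$ or of the prefix, or using parts of several $B_j$.
\begin{itemize}
\item Copies spread over several $B_j$'s are excluded by the direct-sum argument above.
\item For a mixed copy, its entries taken from the high blocks occupy an initial set of values. Because each $q_i$ is skew-indecomposable, the skew decomposition $q=q_1\ominus\cdots\ominus q_k$ is the finest one. Any high/low split of an occurrence along which the high entries all precede the low ones must therefore break $q$ between some $q_i$ and $q_{i+1}$.
\item The hypothesis that $q_2,\dots,q_{k-1}\neq 1$ is what I expect to use to rule out a single inserted entry (or a single entry of $p$) playing the role of a middle skew component.
\item The prefix $q_1$ should absorb, or make impossible, the splits at the first component.
\end{itemize}

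I expect this case analysis to be delicate. If the naive value placement in step 1 fails, my first fallback is to place each $B_j$ directly above-right of the entries of $p$ to its left and below-left of those to its right, locally as a direct sum. I would then use $\ominus$-indecomposability to show that any mixed copy forces a copy of $q$ inside $p$ alone, contradicting $p\in S_{n-rm-\ell}(q)$.
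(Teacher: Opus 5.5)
Your bookkeeping matches the paper: the gaps of $p$ index the subsets, an auxiliary $q_1$ is added, and decoding is done by deleting the inserted copies. There is a genuine gap, though: both placements you commit to create extra copies of $q$, so the map does not land in $S_{n,r}(q)$.

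In step 2, putting the auxiliary $q_1$ at the front with the top values makes the output $q_1\ominus(\text{rest})$. Then the prefix together with the $q_2\ominus\cdots\ominus q_k$ part of any inserted block $B_j$ is a second copy of $q$, so you always get at least $2r$ copies. For $q=231=12\ominus 1$ every single entry of the rest completes a copy with the prefix.

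In step 1, if $B_j$ lies above all of $p$, then the entries of $p$ to the right of the gap lie below and to the right of $B_j$. The $q_1\ominus\cdots\ominus q_{k-1}$ part of $B_j$ plus any copy of $q_k$ among those entries is then a new copy of $q$. This is exactly the split between $q_{k-1}$ and $q_k$ that your own high/low remark permits. For example, with $q=231$, $p=1$ and the gap before its entry, step 1 produces $3421$, which already contains both $342$ and $341$ before the prefix is added.

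Your fallback places the block above everything of $p$ to its left and below everything to its right. That is only possible when the gap is a direct-sum cut of $p$, which fails for most gaps, e.g.\ the middle gap of $p=21$ when $q=231$.

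The missing idea is that the inserted copy of $q$ cannot sit at one value level chosen independently of $p$. Its values must be cut into slabs threaded into $p$ at heights dictated by $p$.

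The paper first replaces $p$ by $q_1\oplus p$. Putting the prefix at the bottom left is harmless, precisely by your own direct-sum observation, and it guarantees a copy of $q_1$ to the left of every insertion point.

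For an insertion position $s$, let $a_i$ be the largest value that is the minimum of an occurrence of $q_1\ominus\cdots\ominus q_i$ lying entirely left of $s$. The values of $q$ are cut into slabs $B_1,\dots,B_{k-1}$: $B_i$ is the minimum of $q_i$ together with $q_{i+1}$ minus its minimum, except that $B_1$ contains all of $q_1$ and $B_{k-1}$ contains all of $q_k$. Slab $B_i$ is inserted immediately above $a_i$.

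Maximality of $a_i$ means the entries left of $s$ above $a_i$ contain no $q_1\ominus\cdots\ominus q_i$. The $q$-avoidance of $p$ means the entries right of $s$ below $a_{i-1}$ contain no $q_i\ominus\cdots\ominus q_k$. A pigeonhole count of how the skew-indecomposable $q_i$ (with the middle ones $\neq 1$) can straddle the slabs then rules out mixed copies. Since the argument only uses that nothing to the right of $s$ lies in a copy of $q$, the $r$ insertions can be performed one after another from left to right.
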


\begin{proof}
    Let $p\in S_{n - rm - \ell }(q)$ and let $S \in \mathcal{I}_{n-rm-\ell+1, r}$. Let $s_1, ..., s_{r}$ be the entries in $S$ in increasing order. Replace $p$ by $q_1 \oplus p \in Av_{n - rm}(q)$, so that we can assume that $p$ begins with a copy of $q_1$, and replace each $s_i$ by $s_i + \ell$.
    
    Our main idea is that we will insert a copy of $q$ in certain blocks of consecutive values at each of the positions $s_1, ..., s_{r}$ in $p$, and we will choose these blocks of values in such a way as to not create extra $q$ patterns.

    Now we will define these blocks of values. Let $B_1$ consist of $q_1$ and all of the values of $q_2$ except for the smallest value. For $2\leq i \leq k-2$, let $B_i$ be the set of values consisting of the smallest value of $q_i$ and all of the values of $q_{i+1}$ except for the smallest value. Let $B_{k-1}$ be the set of values consisting of the smallest value of $q_{k-1}$ and all of $q_k$. For $1 \leq i \leq k$ let $b_i = |B_i|$. 
    

    Now we will insert copies of $q$ iteratively starting at $s_1$. For $1\leq i \leq k-1$ let $a_i$ be the largest entry that is the smallest entry of a $q_1 \ominus ... \ominus q_i$ pattern that is entirely to the left of $s_1$. Insert a copy of $q$ in position $s_1$ with the values of $B_i$ in between $a_i$ and the next larger entry.

    Let $L_1$ be the subsequence of entries to the left of $s_1$ with value greater than $a_1$. For $2\leq i \leq k-1$, let $L_i$ be the subsequence of entries to the left of $s_1$ with value greater than $a_i$, and less than or equal to $a_{i-1}$. Let $L_{k}$ be the subsequence of entries to the left of $s_1$ with value less than or equal to $a_{k-1}$. Likewise define $R_1$, ..., $R_k$ for entries to the right of $s_1$.

    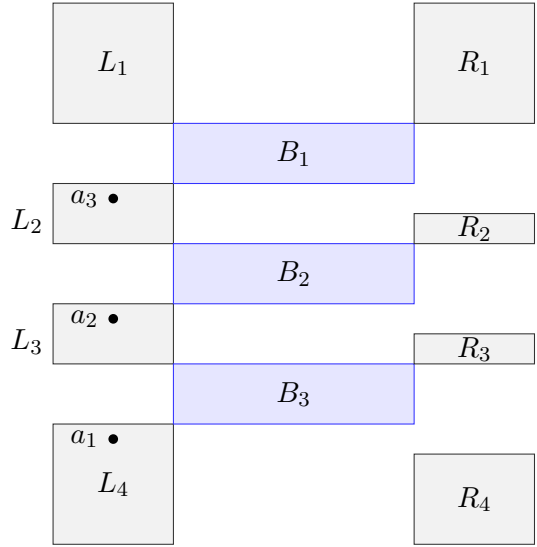
\begin{figure}[H]
        \centering
        \begin{tikzpicture}[scale = .8, point/.style={circle,draw=black!100,fill=black!100,thick,
                     inner sep=0pt,minimum size=1mm},
                          ]
                          
      \draw[draw = black!80!,fill=gray!10] (0,0) rectangle node[anchor= center]{$L_4$} (2,2);
      \draw[draw = black!80!,fill=gray!10] (0,3) node[anchor= south east]{$L_3$} rectangle  (2,4);
      \draw[draw = black!80!,fill=gray!10] (0,5) node[anchor= south east]{$L_2$}rectangle  (2,6);
      \draw[draw = black!80!,fill=gray!10] (0,7) rectangle node[anchor= center]{$L_1$} (2,9);

      \draw[draw = blue!80!,fill=blue!10] (2,2) rectangle node[anchor= center]{$B_3$} (6,3);
      \draw[draw = blue!80!,fill=blue!10] (2,4) rectangle node[anchor= center]{$B_2$} (6,5);
      \draw[draw = blue!80!,fill=blue!10] (2,6) rectangle node[anchor= center]{$B_1$} (6,7);

      \draw[draw = black!80!,fill=gray!10] (6,0) rectangle node[anchor= center]{$R_4$} (8,1.5);
      \draw[draw = black!80!,fill=gray!10] (6,3) rectangle node[anchor= center]{$R_3$} (8,3.5);
      \draw[draw = black!80!,fill=gray!10] (6,5) rectangle node[anchor= center]{$R_2$} (8,5.5);
      \draw[draw = black!80!,fill=gray!10] (6,7) rectangle node[anchor= center]{$R_1$} (8,9) ;

      \node (a1) at (1,1.75) [point] [label=left:$a_1$] {};
      \node (a2) at (1,3.75) [point] [label=left:$a_2$] {};
      \node (a3) at (1,5.75) [point] [label=left:$a_3$] {};

        \end{tikzpicture}
        \caption{Insertion of a $q$ pattern at position $s_1$, where $k = 4$}.
        \label{fig:placeholder}
    \end{figure}

    For $1\leq i \leq k-1$, by the definition of $a_i$ then the entries contained in $L_1 \cup ... \cup L_i$ cannot contain a $q_1 \ominus ... \ominus q_i$. Because $p$ does not contain any $q$ patterns with entries to the right of $s_1$, then for $2 \leq i \leq k$ the entries contained in $R_i \cup ... \cup R_k$ cannot contain a $q_i \ominus ... \ominus q_k$ pattern.

    Say there were a $q$ pattern with some entries in the $B_i$ blocks and some entries not in the $B_i$ blocks. Let $m$ be minimum so that there are some entries in this $q$ pattern in $B_m$, and $M$ be maximum so that there are some entries in this $q$ pattern in $B_M$. 

    Since $L_1 \cup ... \cup L_m$ cannot contain $q_1 \ominus ... \ominus q_m$, then the $q_m$ in the $q$ pattern must have some entries in $B_m$. Since $R_{M+1} \cup ... \cup R_k$ cannot contain any $q_{M+1} \ominus ... \ominus q_k$ pattern, then the $q_{M+1}$ in the $q$ pattern must have some entries in $B_M$. The possible ways to place a $q_i$ for $m\leq i \leq M+1$ so that not all of the entries are in a $B_j$ for some $j$ now must be either with some entries to the left of $B_m$ and some entries in $B_m$, some entries in $B_m$ and some entries in $B_{m+1}$, ..., some entries in $B_{M-1}$ and some entries in $B_M$, or some entries in $B_M$ and some entries to the right of $B_M$, or if $m = 1$ then all of $q_m$ may be entirely in $B_m$ or if $M = k$ then all of $q_M$ may be entirely in $B_M$. There are $M-m + 2$ of these spots and $M-m + 1$ blocks $q_m, ... q_{M+1}$, so it must be the case that either $q_m$ has some entries to the left of $B_m$ or $q_{M+1}$ has some entries to the right of $B_M$, or that $m=1$ and $M=k$ and all of $q_m$ is in $B_m$ and all of $q_M$ is in $B_M$. The latter case can be discarded since in this case $q$ will have no entries outside of the $B_i$ blocks which is a contradiction. Specifying to the former case, since $q_m$ and $q_{M+1}$ are skew indecomposable then either $q_m$ has some entries below and to the left of $B_m$ or $q_{M+1}$ has some entries above and to the right of $B_M$ .
    
    If $q_{M+1}$ has some entries above and to the right of $B_M$, then all of $q_1 ... q_M$ must be in $B_m \cup ... \cup B_{M-1} \cup L_1 \cup ...\cup L_m$. Since $L_1 \cup ...\cup  L_m$ cannot contain $q_1 \ominus ... \ominus q_m$, then $q_m$ must have some entries in $B_m$. So $B_m \cup ... \cup B_{M-1}$ must contain some entries of $q_m$ and $q_{M+1}$ and all of the entries of $q_{m+1}, ..., q_M$. Since there are $M-m$ blocks $B_m ... B_{M-1}$ and $M-m$ blocks of entries $q_{m+1}, ... q_M$, then it must be that some $q_i$ is entirely contained in some $B_j$, which is a contradiction.

    The case where $q_m$ has some entries below and to the left of $B_m$ results in a similar contradiction.

    
    Notice now that in the above argument we only required that the entries after $s_1$ do not contain entries in a $q$ pattern. This condition is now satisfied for $s_2$, so we can repeat the above argument for $s_2$, and again for $s_3, ..., s_{r}$, until we get a permutation in $S_{n,r}(q)$ which we call $\phi(p)$.

    To see that $\phi$ is injective, given $\phi(p)$ we can remove all of the entries that participate in $q$ patterns except for the leftmost $q$ pattern, and store the $r$ positions where each consecutive $q$ pattern was inserted to obtain $S$. Finally we can remove the leftmost $q_1$ pattern from the permutation to obtain $p$.
\end{proof}
Now, we note that this injection gives us exactly the desired lower bound.
\begin{corollary}
    Let $q$ be a permutation of length $m$ which is indecomposable, and is of the form $q_1 \ominus ... \ominus q_k$ for some $k \geq 2$ where all of the $q_i$ are skew-indecomposable and $q_2, ..., q_{k-1}$ are not $1$. Suppose $r \geq 1$. Then
    \[
    |S_{n,r}(q)| \geq kn^r|S_{n}(q)|
    \]
    for some constant $k$.
\end{corollary}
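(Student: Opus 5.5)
The plan is to deduce the bound directly from the injection $\phi$ of the preceding theorem. Write $c = rm+\ell$, where $m=|q|$ and $\ell = |q_1|$ are fixed. Since $\phi$ is injective,
\[
|S_{n,r}(q)| \geq |\mathcal{I}_{n-c+1,r}| \cdot |S_{n-c}(q)| = \binom{n-c+1}{r} |S_{n-c}(q)|
\]
for all $n$ large enough that the domain makes sense. Two estimates then remain. The binomial factor is polynomial: for $n \geq 2c + r$, say, $\binom{n-c+1}{r} \geq \frac{(n-c-r+2)^r}{r!} \geq \frac{n^r}{2^r r!}$. So it is at least a constant times $n^r$.

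The second, and main, step is to compare $|S_{n-c}(q)|$ with $|S_n(q)|$, i.e.\ to show $|S_{n}(q)| \leq C' |S_{n-c}(q)|$ for a constant $C'$ independent of $n$. Since $q$ is indecomposable, the class $Av(q)$ is closed under direct sums. In particular, $p \mapsto 1 \oplus p$ is an injection $S_{n-1}(q) \to S_n(q)$, so $|S_n(q)|$ is nondecreasing. This gives the wrong direction, however. For the needed direction I would invoke supermultiplicativity: $\oplus$ gives $|S_{a+b}(q)| \geq |S_a(q)||S_b(q)|$. Combined with Marcus--Tardos finiteness of the growth rate, Fekete's lemma shows that $L=\lim |S_n(q)|^{1/n}$ exists and that $|S_n(q)| \leq L^n$ for all $n$. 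This still only controls ratios up to subexponential factors, not up to a constant, so it is not enough by itself.

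I therefore expect the honest route to be the following. Either the ratio $|S_n(q)|/|S_{n-1}(q)|$ is bounded, which follows from the above once one shows $|S_{n}(q)|\le C\,|S_{n-1}(q)|$, for instance by the standard map deleting the entry $n$ and noting that each permutation in $S_{n-1}(q)$ has at most $n$ preimages. That only gives a factor $n$ per step, which would lose $n^{c}$. Or one uses the known asymptotic form $|S_n(q)| \sim C L^n n^{\alpha}$ where available. Since the corollary is stated with an unspecified constant, I would phrase the argument as: for $n$ large, $|S_{n,r}(q)| \geq \frac{1}{2^r r!} n^r |S_{n-c}(q)|$. I would then argue that $|S_{n-c}(q)| \geq \kappa |S_n(q)|$ whenever the growth of $|S_n(q)|$ is regular (ratios of consecutive terms converge to $L$), which holds in the cases of interest. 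The constant is then $\kappa/(2^r r!)$, up to the factor $L^{-c}$.

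The main obstacle is precisely this shift-comparison. Everything else is bookkeeping about the binomial coefficient. If I cannot establish bounded consecutive ratios in general, I would state the corollary with $|S_{n-rm-\ell}(q)|$ in place of $|S_n(q)|$, and recover the stated form under the hypothesis that $|S_{n+1}(q)|/|S_n(q)|$ is bounded, as it is for $231$ and the other patterns to which the paper applies the result in Section 5.
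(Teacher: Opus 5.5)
Your reduction is exactly the paper's. Its whole proof is that the corollary ``follows immediately from the previous Theorem,'' i.e.\ from $|S_{n,r}(q)|\ge\binom{n-c+1}{r}|S_{n-c}(q)|$ with $c=rm+\ell$. Your handling of the binomial factor is fine, with one small slip: $n-c-r+2\ge n/2$ needs $n\ge 2c+2r-4$, so take $n\ge 2c+2r$ rather than $2c+r$.

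The step you isolate is a genuine gap, and the paper's one-line proof passes over it without comment.

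\begin{itemize}
\item Replacing $|S_{n-c}(q)|$ by $|S_n(q)|$ requires $|S_n(q)|\le C\,|S_{n-c}(q)|$ with $C$ independent of $n$.
\item As you note, closure of $Av(q)$ under $\oplus$ together with Marcus--Tardos and Fekete only determines $|S_n(q)|$ up to a factor $e^{o(n)}$.
\item Deleting the maximal entry costs a factor $n$ per step, so it does not help either.
\end{itemize}

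The paper supplies no bounded-ratio argument for a general indecomposable $q$. So your proposal does not prove the corollary as stated, but neither does the paper.

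What is actually proved is your weaker form with $|S_{n-rm-\ell}(q)|$ in place of $|S_n(q)|$. This upgrades to the stated form whenever $|S_{n+1}(q)|/|S_n(q)|$ is bounded. That covers every application in Section~5, where $|S_n(q)|\sim C L^n n^{\alpha}$ is known:
\begin{itemize}
\item $23\cdots k1$, via Backelin--West--Xin and Regev;
\item $3412$, via $4321$;
\item $4213$, as the complement of $1342$.
\end{itemize}

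It does not cover, for example, $q=4231=1\ominus 12\ominus 1$. This pattern satisfies all the hypotheses but is Wilf-equivalent to $1324$, whose asymptotics are unknown. Hence Theorem~4.1 in full generality, and the Section~6 claim for $4231$, rest on exactly the step you could not close.
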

\begin{proof}
    This follows immediately from the previous Theorem.
\end{proof}

\section{An Upper Bound}
We will need the following definition.
\begin{definition}
    Let $S^*_{n,r}(q)$ be the set of permutations of length $n$ which contain exactly $r$ copies of $q$ and are such that none of these $r$ copies of $q$ have any two entries in common.
\end{definition}
We are now going to construct an injection on $S^*_{n,r}(q)$ sets. We will do this inductively, with the following lemma being the base case of this induction.
\begin{lemma}
    There is an injection 
    \[
    \psi: S_{n,r}^*(21) \hookrightarrow [n] \times S^*_{n,r-1}(21)
    \]
    which takes a permutation $p \in S_{n,r}^*(21)$ and swaps two entries, and does not create any new $21$ patterns which were not in $p$.
\end{lemma}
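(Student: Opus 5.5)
The plan is to first pin down the structure of permutations in $S^*_{n,r}(21)$ completely. Once that is done, $\psi$ will simply undo one of the $r$ inversions, and we record where it was undone.

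\textbf{Step 1: an isolated inversion is an adjacent swap of consecutive values.} Let $p\in S^*_{n,r}(21)$ and let $(p_i,p_j)$ with $i<j$, $p_i>p_j$ be one of its $21$ copies. By the disjointness condition, neither $p_i$ nor $p_j$ lies in any other $21$ copy.
\begin{itemize}
\item Consider an entry $x=p_t$ with $i<t<j$. If $x<p_i$, then $(p_i,x)$ is another inversion containing $p_i$. If $x>p_i>p_j$, then $(x,p_j)$ is another inversion containing $p_j$. Either way we contradict disjointness, so $j=i+1$.
\item Consider a value $v$ with $p_j<v<p_i$, sitting at position $t\notin\{i,j\}$. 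If $t<i$, then $(v,p_j)$ is an inversion. If $t>j$, then $(p_i,v)$ is an inversion. Both contradict disjointness, so $p_i=p_j+1$.
\end{itemize}
Hence every $p\in S^*_{n,r}(21)$ is the identity permutation with $r$ disjoint adjacent transpositions applied. Each transposition swaps positions $i,i+1$ holding values $i,i+1$. Conversely, every such permutation has exactly these $r$ inversions, which are pairwise disjoint.

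\textbf{Step 2: define $\psi$.} Take the leftmost inverted pair, at positions $i,i+1$. Swap these two entries and set
\[
\psi(p)=(i,p'),
\]
where $p'$ is the resulting permutation. By the structural description, $p'$ is the identity with the remaining $r-1$ disjoint adjacent transpositions applied. So $p'\in S^*_{n,r-1}(21)$, and its inversions are exactly those of $p$ minus one; in particular no new $21$ pattern is created. The position $i$ lies in $[n]$; in fact $i\le n-1$.

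\textbf{Step 3: injectivity.} From $(i,p')$ we recover $p$ by swapping the entries of $p'$ at positions $i$ and $i+1$. Therefore $\psi$ is injective.

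The only step needing real care is the rigidity argument in Step 1. The rest is bookkeeping. It is also worth recording, for the inductive use later, that the swap only affects the two entries of the removed copy.
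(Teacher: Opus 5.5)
Your proof is correct and follows the paper's argument. You take the leftmost inversion, use disjointness to force its two entries to be adjacent in position and consecutive in value, swap them, record $i$, and recover $p$ by swapping positions $i$ and $i+1$ back. Your global description of $S^*_{n,r}(21)$ as the identity with $r$ disjoint adjacent transpositions goes beyond what the paper states, and the ``hence'' needs one line (undoing all $r$ disjoint adjacent swaps leaves no inversions). It is not actually needed, though, since swapping two entries in adjacent positions changes only the relative order of that one pair.
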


\begin{proof}
    Let $p \in S_{n,r}^*(21)$ and let $p_ip_j$ be the $21$ pattern in $p$ chosen so that $p_i$ is leftmost. We will set $\psi(p) = (\psi_1(p), \psi_2(p))$ where $\psi_1(p) = i$ and $\psi_2(p)$ is the permutation obtained by swapping $p_i$ and $p_j$ in $p$.

    Note that since no two copies of $21$ in $p$ intersect, then the only places where there may be entries in $p$ are to the left of $i$ with value below $p_j$, and to the right of $j$ with value above $p_i$.

    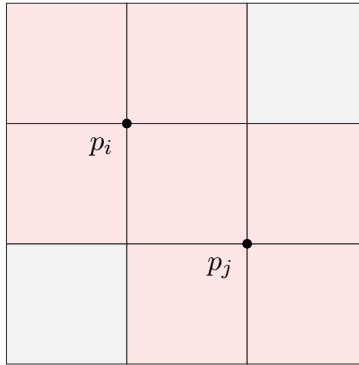
\begin{figure}[H]
        \centering
        \begin{tikzpicture}[scale = .8, point/.style={circle,draw=black!100,fill=black!100,thick,
                     inner sep=0pt,minimum size=1mm},
                          ]
                          
      \draw[draw = black!80!,fill=gray!10] (0,0) rectangle node[anchor= center]{} (2,2);
      \draw[draw = black!80!,fill=red!10] (2,0) rectangle node[anchor= center]{} (4,2);
      \draw[draw = black!80!,fill=red!10] (4,0) rectangle node[anchor= center]{} (6,2);
      \draw[draw = black!80!,fill=red!10] (0,2) rectangle node[anchor= center]{} (2,4);
      \draw[draw = black!80!,fill=red!10] (2,2) rectangle node[anchor= center]{} (4,4);
      \draw[draw = black!80!,fill=red!10] (4,2) rectangle node[anchor= center]{} (6,4);
      \draw[draw = black!80!,fill=red!10] (0,4) rectangle node[anchor= center]{} (2,6);
      \draw[draw = black!80!,fill=red!10] (2,4) rectangle node[anchor= center]{} (4,6);
      \draw[draw = black!80!,fill=gray!10] (4,4) rectangle node[anchor= center]{} (6,6);

      \node (pi) at (2,4) [point] [label=below left:$p_i$] {};
      \node (pj) at (4,2) [point] [label=below left:$p_j$] {};

        \end{tikzpicture}
        \caption{Diagram of $p_i$ and $p_j$ in $p$. The red squares contain no entries and the gray squares may contain entries.}
        \label{fig:placeholder}
    \end{figure}
    As such, $p_i$ and $p_j$ have consecutive positions and consecutive values. Thus swapping $p_i$ and $p_j$ cannot create any new $21$ patterns, and removes the $21$ pattern given by $p_ip_j$, thus $\psi_2(p)$ has exactly $r-1$ copies of $21$ and none of the copies share any two entries.

    To see that this map is injective, given $\psi(p)$ we may simply swap the positions $\psi_1(p)$ and $\psi_1(p) + 1$ in $\psi_2(p)$ to obtain $p$.
\end{proof}
Now we will inductively construct the full injection. 
\begin{theorem}
    Suppose $q$ is a separable skew-decomposable permutation of size $k\geq 2$. Then there is an injection
    \[
    \psi: S_{n,r}^*(q) \hookrightarrow [n] \times S^*_{n, r-1}(q)
    \]
    which takes a permutation $p \in S_{n,r}^*(q)$ and swaps two entries, and does not create any new $q$ patterns which were not in $p$.
\end{theorem}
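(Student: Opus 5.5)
Following the paper's announcement that the full injection is built inductively with the preceding Lemma as base case, I will induct on the size $k$ of the separable skew-decomposable pattern $q$. The base case $k=2$ forces $q=21$, which is exactly the preceding Lemma. For the inductive step, write $q = \alpha \ominus \beta$ with $\alpha,\beta$ separable and nonempty; choose the decomposition so that $\beta$ is skew-indecomposable. If one of $\alpha,\beta$ has length at least $2$, then it is either skew-decomposable, in which case induction applies directly, or (being separable) it is $\oplus$-decomposable, in which case I apply the induction to the $\oplus$-version by symmetry (reverse-complement/reversal maps $\oplus$ to $\ominus$ and preserves the swap-two-entries property). So I plan to strengthen the induction hypothesis to all separable patterns of size $\ge 2$, covering both the $\ominus$ and $\oplus$ cases via the symmetry $q\mapsto q^{r}$ (reversal), which turns skew sums into direct sums.

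The construction for $p\in S^*_{n,r}(q)$ will be as follows. Take the copy $C$ of $q$ in $p$ whose first entry is leftmost. Inside $C$, the entries splitting into the $\alpha$ part and the $\beta$ part form a $21$-type configuration at the block level. I will locate a pair of entries $p_i$ (in the $\alpha$ part) and $p_j$ (in the $\beta$ part) which are adjacent in position and value in $p$. To do this, I argue as in the Lemma: because the $r$ copies are pairwise disjoint and there are exactly $r$ of them, any entry lying in the rectangle between the $\alpha$ part and the $\beta$ part of $C$ (to the right of the $\alpha$ entries, to the left of the $\beta$ entries, or with intermediate value) could be substituted for an entry of $C$ to form a new copy of $q$ sharing entries with $C$. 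This would contradict the assumption that the copies are disjoint. Iterating this substitution argument recursively through the separable decomposition, I will show that $C$ occupies an interval of positions and an interval of values, i.e. $C$ is a block of $p$. Then I swap the last entry of the $\alpha$ block's lowest part with the adjacent first entry of $\beta$. More concretely, I choose the two entries of $C$ realizing a descent between consecutive positions whose values are also consecutive; such a pair exists inside the block because $q=\alpha\ominus\beta$ with $\beta$ skew-indecomposable. The swap destroys $C$, and I then set $\psi_1(p)=i$.

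Three things must then be checked. First, the swap creates no new copy of $q$: any new copy must use both swapped entries, or one of them together with other entries. Because the swapped entries are adjacent in position and in value, a new copy can only differ from an old pattern in the relative order of these two entries. So it would correspond to an occurrence of $q$ containing an adjacent ascent pair in the pre-swap permutation, which I would rule out using the block structure of $C$ and disjointness. Second, no other copy is destroyed, since other copies avoid $C$'s entries and the swap only changes the relative order of those two entries. Third, the map is injective, since the swap is reversed by swapping positions $i,i+1$. The main obstacle I expect is the first of these: showing that a copy of $q$ occupies a block and that the swap creates no new copy. If the recursive substitution argument is too weak, I would first try choosing $C$ more carefully, e.g. extremal among copies in a lexicographic order, so that extra entries near $C$ yield copies violating that extremality.
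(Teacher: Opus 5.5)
There is a genuine gap at the heart of your construction. You claim that the chosen copy $C$ contains a descent pair adjacent in both position and value, supported by the claim that disjointness forces $C$ to be a block. Both claims are false.

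Take $q=312=1\ominus 12$, which is separable with $\beta=12$ skew-indecomposable, and $p=312\in S^*_{3,1}(312)$. Here $C$ is the whole permutation, so it is trivially a block. Yet the descent $(3,1)$ has non-consecutive values, and $(3,2)$ has non-consecutive positions. The pattern $q=231$ behaves the same way.

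The substitution argument also breaks down. For $q=321$ and $p=3\,6\,1\,4\,7\,2\,5$, the only copy of $321$ is $6\,4\,2$ at positions $2,4,6$. The entry $7$ sits between the $\alpha$-part $6\,4$ and the $\beta$-part $2$, but it cannot be substituted into any copy. Neither can $1$, $3$, $5$, which fill the other positional and value gaps of $C$.

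In this example no two entries of $C$ are adjacent in position or in value. A swap of two entries adjacent in position (or in value) changes the relative order of only that pair. Hence every swap that destroys $C$ must exchange entries that are far apart in both coordinates. So no scheme of the form ``record $i$, then swap back positions $i,i+1$'' can be injective. Choosing $C$ extremally does not help either, since here $r=1$.

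What is missing is the recursive localisation the paper uses. Your induction hypothesis was presumably meant to supply it, but as written your construction never invokes it.

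The paper proceeds as follows:
\begin{itemize}
\item Write $q$, up to symmetry, as $q_1\ominus q_2$.
\item Let $S$ be the set of entries of $p$ lying where $q_2$ would sit relative to some copy of $q_1$.
\item Apply the inductive injection for $q_2$ to the subpermutation formed by $S$ alone.
\item Record the position of one swapped entry.
\end{itemize}
The swapped entries are then adjacent within $S$, but typically not within $p$. In the example above, $S$ consists of $1,4,2,5$, the swap is $4\leftrightarrow 2$, and the result $3\,6\,1\,2\,7\,4\,5$ avoids $321$.

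Injectivity then rests on showing that $S$ can be recovered from the image together with the recorded position. That is the real work in the paper's proof, and your proposal has no counterpart to it.
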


\begin{proof}
    We will prove this by induction on $k$. The case $k=2$ is proven in the previous lemma. 
    
    So assume $k \geq 3$. Take the complement of $q$ and write $q = q_1 \oplus q_2$ for some $q_1$ and $q_2$. We can assume that $q_2$ has size at least $2$ (otherwise take the reverse complement). Since $q$ is separable, then $q_2$ is either skew-decomposable or decomposable.

    Suppose first that $q_2$ is skew-decomposable. Then there is an injection 
    \[
    \psi_{q_2}: S_{n,r}^*(q_2) \hookrightarrow [n] \times S^*_{n, r-1}(q_2).
    \]
    Let $S$ be the set of entries of $p\in S^*_{n,r}(q)$ which sit above and to the right of $q_1$. To define $\psi$, simply apply $\psi_{q_2}$ to $S$ to obtain the permutation and also return the position in $p$ of the entry being decreased in the swap. To show that $\psi$ is injective, it suffices to show that it is possible to determine $S$ given $\psi(p)$. 
    
    Now, in $\psi(p)$ the entry that is increased by the swap will continue to be above and to the right of $q_1$. Any entry which was not in $S$ which ends up above and to the right of $q_1$ must only be above and to the right of copies of $q_1$ that include the entry that is decreased in the swap. No entry which begins in $S$ can leave $S$ except for possibly the smaller swapped value, since the entry would need to be above and to the right of one of the swapped entries for the swap to affect it, but the swapped entries were in $S$ to begin with so there is some further copy of $q_1$ below and to the left which is unaffected. Since $\psi(p)$ gives the position of the smaller swapped value, then $S$ can be determined entirely from $\psi(p)$, and thus $\psi$ is injective.

    Suppose now that $q_2$ is decomposable. By taking complements, there is an injection
    \[
    \psi'_{q_2}: S_{n,r}(q_2) \hookrightarrow [n] \times S_{n, r-1}(q_2)
    \]
    with the first coordinate giving the position of the entry being \textit{increased}. Let $S$ be the set of entries of $p\in S_{n,r}(q)$ which sit above and to the right of $q_1$. To define $\psi$, simply apply $\psi_2$ to $S$ to obtain the permutation and return the position of the entry being increased. To show that $\psi$ is injective, it suffices to show that is is possible to determine $S$ given $\psi(p)$. 
    
    Now, in $\psi(p)$ the entry that is decreased by the swap will continue to be above and to the right of $q_1$ since the smaller value is being moved from the left and was in $S$ to begin with. No entry that was not in $S$ may end up above and to the right of $q_1$ since such an entry must sit above and to the right of one of the swapped entries, but then must have been above and to the right one of the entries before the swap and thus must have been above and to the right of $q_1$. No entry that begins in $S$ may end up not in $S$ since any such entry must be in $\psi(p)$ somewhere above and to the right of the position of the smaller swapped entry in $p$, and hence above and to the right of $q_1$. Hence $S$ is unchanged by $\psi(p)$, and thus $\psi$ is injective.
\end{proof}
From this injection, we can obtain the desired upper bound on not just the $S^*_{n,r}(q)$ sets but actually the $S_{n,r}(q)$ sets themselves.
\begin{corollary}
    Suppose $q$ is a separable skew-decomposable permutation of size $k\geq 2$ and let $r\geq 1$. Then
    \[
    |S_{n,r}(q)| \leq Kn^r |S_{n}(q)|
    \]
    for some constant $K$.
\end{corollary}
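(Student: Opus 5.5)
Iterating the injection of the previous Theorem $r$ times gives
\[
|S^*_{n,r}(q)| \le n^r |S^*_{n,0}(q)| = n^r |S_n(q)|,
\]
since $S^*_{n,0}(q) = S_n(q)$. Each step swaps two entries, does not create any new $q$ pattern, and destroys exactly one copy, while keeping the remaining copies pairwise disjoint. So after $r$ steps we land in $S_n(q)$, and recording the $r$ positions makes the composite injective into $[n]^r \times S_n(q)$. The rest of the work is to pass from $S^*_{n,r}(q)$ to $S_{n,r}(q)$, where the $r$ copies of $q$ may share entries.

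For the reduction I would take $p \in S_{n,r}(q)$ and let $U$ be the union of the entries involved in the $r$ copies of $q$. Then $|U| \le rk$, and the pattern $\pi = \mathrm{st}(U)$ formed by these entries belongs to a finite set depending only on $q$ and $r$. I would remove $U$ from $p$ to obtain a permutation $p'$ of length $n - |U|$. Any copy of $q$ in $p'$ would be a copy in $p$ disjoint from $U$, which is impossible, so $p' \in S_{n-|U|}(q)$. The map $p \mapsto (\pi, \text{positions of } U, \text{values of } U, p')$ is injective. However, positions and values together give up to $n^{2|U|}$ choices, which is far too many. So I need a more economical encoding.

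A better approach is to replace the cluster of copies by disjoint ones, or equivalently to note that the positions and values of $U$ are not free. I would instead argue directly in the style of Theorem~3.3: in $p$, the entries of a single copy of $q$ (or of a cluster of overlapping copies) must occupy an essentially contiguous block of positions and values, because any outside entry lying in the wrong region would create an extra copy of $q$. This is exactly the argument of Lemma~3.2, where $p_i,p_j$ were forced to be adjacent in both position and value. If each connected cluster of overlapping copies (connected via shared entries) is forced to be an interval of positions and values up to boundedly many exceptions, then encoding a cluster costs one position, one value offset, and finitely many shapes. Since the number of clusters is at most $r$, this gives $|S_{n,r}(q)| \le C\, n^{2r} |S_n(q)|$ at first. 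To reach $n^r$ I would use that collapsing a cluster into a single entry, or deleting it, gives a $q$-avoider, and that the block is determined by its position once we know its value interval, which is forced by the structure of $p'$ (as in the injectivity argument of Lemma~3.2, where only $\psi_1$ was stored).

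Alternatively, and more cleanly, I would repeat the inductive swap argument of the previous Theorem without the disjointness hypothesis. I would pick the leftmost copy and perform a swap that reduces the number of copies by at least one and at most a bounded amount, creating no new copies, and record the position together with the (bounded) drop in count. Summing over the at most $r$ steps gives $|S_{n,r}(q)| \le K n^r |S_n(q)|$, with $K$ accounting for the bounded choices of drop sequences, while $|S_{n,r'}|$ for $r' < r$ contributes the lower-order terms $n^{r'}|S_n(q)|$. The main obstacle is checking that the swap in Lemma~3.2 and in the inductive step still creates no new patterns when copies overlap. For $21$, a swap of an adjacent inversion with consecutive values is needed, and overlapping inversions break adjacency. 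There I would choose the inversion $p_ip_{i+1}$ with adjacent positions, which always exists when $p$ is not increasing. Swapping it removes exactly one inversion and creates none, which handles $q = 21$ for general $S_{n,r}$. The inductive step would then go through as written with $S_{n,r}$ in place of $S^*_{n,r}$, as the decomposable case in the proof already does.
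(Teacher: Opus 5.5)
Your first step is fine and agrees with the paper, which applies Theorem~3.3 twice rather than $r$ times: $|S^*_{n,r}(q)|\le n^r|S_n(q)|$. The gap is in passing from $S^*_{n,r}(q)$ to $S_{n,r}(q)$.

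\textbf{The contiguity claim fails.} Take the separable skew-decomposable pattern $321$. The permutation $n\,1\,2\cdots(n-3)\,(n-1)\,(n-2)$ contains exactly one copy of it. That copy sits at positions $1,n-1,n$, with $n-3$ entries in between. So copies need not be contiguous, even up to boundedly many exceptions.

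\textbf{The final route is unproved and fails as described.} You assert that the inductive step of Theorem~3.3 goes through verbatim for $S_{n,r}$, but you only check the base case $21$. The $S_{n,r}$ written in the decomposable case of that proof does not show that overlaps are handled. That step applies the $q_2$-map to the set $S$ of entries lying above and to the right of some copy of $q_1$. Copies of $q_2$ inside $S$ need not extend to copies of $q$, and the swap can create copies of $q$ whose $q_1$-part lies outside $S$.

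Concretely, take $q=312$ and, as the paper does, work with its complement $132=1\oplus 21$.
\begin{itemize}
\item The permutation $35124$ contains exactly one $132$, namely $3,5,4$.
\item Here $S$ reads $5,2,4$, and its only adjacent inversion is $5,2$.
\item Swapping these gives $32154$, which contains three copies of $132$.
\end{itemize}
So the count can go up, and you do not obtain an injection into $[n]\times\bigcup_{d\ge 1}S_{n,r-d}(q)$.

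\textbf{The missing idea.} The paper handles overlapping copies cheaply, with no swap at all. If $p\in S_{n,r}(q)\setminus S^*_{n,r}(q)$, some entry lies in at least two copies of $q$. Delete the first such entry and record its position and value. Deleting an entry creates no copies and destroys every copy through it. This gives an injection into $[n]^2\times\bigcup_{i=0}^{r-2}S_{n-1,i}(q)$, so the factor $n^2$ is paid for by a drop of at least two in the count. Your idea of deleting all of $U$ was in this spirit, but you should remove only one shared entry. Hence
\[
|S_{n,r}(q)|\le n^r|S_n(q)|+n^2\sum_{i=0}^{r-2}|S_{n-1,i}(q)| .
\]
Strong induction on $r$, together with $|S_{n-1}(q)|\le|S_n(q)|$, bounds the sum by a constant times $n^{r-2}|S_n(q)|$, which gives the corollary.
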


\begin{proof}
    We will prove the statement by induction on $r$. It is true that $S_{n,1}(q) = S_{n,1}^*(q)$ and $S_{n,0}(q) = S_{n,0}^*(q)$, so the previous theorem proves the result for $r = 1$.

    Now suppose $r\geq 2$. By applying the lemma twice we have
    \[
    |S_{n,r}^*(q)| \leq n^2 |S_{n,r-2}^*(q)|.
    \]
    Let $S_{n,r}^{\text{int}}(q) = S_{n,r}(q) \backslash S_{n,r}^*(q)$ be the permutations with $r$ copies of $q$ where there are two copies of $q$ which share some entry. There is an injection
    \[
    \phi: S_{n,r}^{\text{int}}(q) \hookrightarrow [n]^2 \times \cup_{i=0}^{r-2} S_{n-1,i}(q)
    \]
    which is given by taking $p\in S_{n,r}^{\text{int}}(q)$ and taking the first entry which is in at least two copies of $q$, say $p_i$, and setting $\phi(p) = (i,p_i, \phi_3(p))$ where $\phi_3(p)$ is the permutation obtained by removing $p_i$ from $p$. As such
    \[
    |S_{n,r}^{\text{int}}(q)| \leq n^2 \sum_{i=0}^{r-2} |S_{n-1,i}(q)| \leq K_1 n^2 |S_{n,r-2}(q)|
    \]
    for some constant $K_1$.

    Combining the two statements we get
    \[
    |S_{n,r}(q)| \leq (K_1 + 1) n^2 |S_{n,r-2}(q)|.
    \]
    Now by induction we have $|S_{n,r-2}(q)| \leq K_2 n^{r-2} |S_n(q)|$ and therefore
    \[
    |S_{n,r}(q)| \leq (K_1 + 1) K_2 n^r |S_{n}(q)|
    \]
    which completes the proof.
\end{proof}

\section{Main Result}
Now we may state the main result.
\begin{theorem}
    Let $q$ be a separable indecomposable permutation of length at least $2$ which is of the form $q_1 \ominus ... \ominus q_k$ for some $k \geq 2$ where all of the $q_i$ are skew-indecomposable and $q_2, ..., q_{k-1}$ are not $1$. Then there are constants $k$ and $K$ such that
    \[
    kn^r|S_{n}(q)| \leq |S_{n,r}(q)| \leq Kn^r|S_{n}(q)|.
    \]
\end{theorem}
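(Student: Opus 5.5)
The plan is to obtain the two inequalities separately, taking each from the corresponding corollary of Sections~2 and~3, and then check that the hypotheses of the main theorem imply the hypotheses of both corollaries.

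\emph{Lower bound.} The hypotheses here are exactly those of the Corollary in Section~2: $q$ is indecomposable, $q=q_1\ominus\cdots\ominus q_k$ with $k\ge 2$, each $q_i$ is skew-indecomposable, and $q_2,\dots,q_{k-1}\neq 1$. That corollary therefore applies directly. If one wants to see the constant explicitly, the injection $\phi$ gives
\[
|S_{n,r}(q)|\ \ge\ \binom{n-rm-\ell+1}{r}\,|S_{n-rm-\ell}(q)|.
\]
Here $\binom{n-c+1}{r}\ge c' n^r$ for $n$ large, with $c=rm+\ell$ fixed. The remaining point is to compare $|S_{n-c}(q)|$ with $|S_n(q)|$. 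I would use the standard supermultiplicativity-type fact $|S_{n+1}(q)|\le (n+1)^2|S_n(q)|$, but that loses a polynomial factor, so it is not good enough. Instead I rely on the paper's Corollary as stated, which I take to already absorb this shift. Failing that, the fallback is the injection $S_n(q)\to S_{n+1}(q)$ given by prepending a new maximum or otherwise padding. Since $q$ is indecomposable, $1\oplus p$ avoids $q$ whenever $p$ does. This gives monotonicity $|S_{n-c}(q)|\le |S_n(q)|$, but in the wrong direction for the lower bound. The honest route is to quote the Corollary, whose finite shift I treat as established.

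\emph{Upper bound.} I would apply the Corollary of Section~3, which requires $q$ to be separable and skew-decomposable of size at least~$2$. Separability is assumed. Skew-decomposability follows from $q=q_1\ominus\cdots\ominus q_k$ with $k\ge 2$. The length-at-least-$2$ condition is given. Hence $|S_{n,r}(q)|\le Kn^r|S_n(q)|$.

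Combining the two displays gives the theorem. The only real obstacle is the bookkeeping in the lower bound, namely converting $|S_{n-rm-\ell}(q)|$ into a constant multiple of $|S_n(q)|$. I would handle it by invoking the Section~2 Corollary verbatim rather than re-deriving it. If pressed, I would use the known growth behaviour of $|S_n(q)|$ for these classes to bound the ratio $|S_n(q)|/|S_{n-c}(q)|$ by a constant.
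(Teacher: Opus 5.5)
Your proposal matches the paper's proof: the lower bound is the Section~2 corollary quoted verbatim, and the upper bound is the Section~3 corollary once skew-decomposability is checked. You read skew-decomposability off directly from $q=q_1\ominus(q_2\ominus\cdots\ominus q_k)$ with $k\ge 2$, whereas the paper deduces it from separability, indecomposability and length at least $2$; both are fine.

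The shift from $|S_{n-rm-\ell}(q)|$ to $|S_n(q)|$ that you flag is a real point, but the paper also leaves it inside the Section~2 corollary, which it says follows ``immediately''. So by quoting that corollary you rely on exactly what the paper relies on. Your fallbacks would not settle the shift for general $q$, since monotonicity runs the wrong way and sharp asymptotics of $|S_n(q)|$ are not available in general.
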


\begin{proof}
    We can apply Corollary $2.2$ immediately to get the lower bound. Since $q$ is separable, indecomposable, and of length at least $2$, then $q$ must be skew-decomposable. Hence we may also apply Corollary $3.4$ to get the upper bound.
\end{proof}

\section{Analytic Results}
To obtain the new results in Theorems $5.5, 5.8$, and $5.9$, we will combine our main result Theorem $4.1$ with some well-known previous results about certain permutation patterns.

This first lemma is provided in \cite{Bona} and the proof is due to Bostan.
\begin{lemma}
    Let $f(z) = \sum_{n\geq 0} f_n z^n$ be a power series with nonnegative real coefficients that is analytic at the origin. Assume that $c,C,\gamma$, and $\alpha$ exist so that $\alpha \leq -1$ is an integer and for all (sufficiently large) positive integers $n$ the chain of inequalities
    \[
    cn^\alpha \gamma^n \leq f_n \leq Cn^\alpha \gamma^n
    \]
    holds. Then $f(z)$ is not an algebraic power series.
\end{lemma}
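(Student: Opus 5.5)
We prove that an algebraic $f$ cannot have coefficients satisfying these bounds. Assume for contradiction that $f(z)$ is algebraic.

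The first step is to use the structure theory of algebraic functions. An algebraic power series with nonnegative coefficients has a positive singularity on its circle of convergence. By Pringsheim's theorem, the bounds $cn^\alpha\gamma^n \le f_n \le Cn^\alpha\gamma^n$ force the radius of convergence to be exactly $\rho = 1/\gamma$, and $z=\rho$ is a singularity. Algebraic functions have only finitely many singularities. Near each dominant singularity $\zeta$ (with $|\zeta|=\rho$), $f$ has a convergent Puiseux expansion in powers of $(1-z/\zeta)^{1/d}$. This allows a logarithm-free singular expansion. By the transfer theorems of Flajolet--Odlyzko (the standard Newton--Puiseux asymptotics for algebraic coefficients), we get
\[
f_n = \gamma^n n^{\beta}\Big(\sum_{j=1}^{J} C_j \omega_j^n + o(1)\Big),
\]
where the $\omega_j$ lie on the unit circle and the $C_j$ are not all zero. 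The exponent $\beta$ has the form $-1-s$, where $s$ is the leading Puiseux exponent. Moreover $s \notin \mathbb{Z}_{\ge 0}$, since terms with nonnegative integer exponents are analytic and contribute nothing to the asymptotics. Hence $\beta\in\mathbb{Q}\setminus\{-1,-2,-3,\dots\}$.

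The second step compares exponents. Averaging the trigonometric sum over $n$ and using the positivity $f_n\ge 0$ together with the lower bound shows that the leading growth is genuinely of order $\gamma^n n^{\beta}$. The upper bound gives $\beta\le\alpha$. The lower bound $f_n\ge c n^\alpha\gamma^n$ for all large $n$ needs more care, because the sum $\sum C_j\omega_j^n$ could in principle be small along some $n$. Still, its mean square is $\sum|C_j|^2>0$, so it is bounded below in absolute value on a positive-density set of $n$. Combining this with the lower bound on every $n$ gives $\beta\ge\alpha$. Hence $\beta=\alpha$, a negative integer, which contradicts $\beta\notin\{-1,-2,\dots\}$. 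Therefore $f$ is not algebraic.

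The main obstacle is the second step: making the exponent comparison rigorous in the presence of several dominant singularities, where oscillating terms could cancel. I would first reduce to the dominant singularities carrying the maximal exponent. The set $\{n : |\sum C_j\omega_j^n|>\epsilon\}$ has positive upper density, so the upper bound gives $\beta\le\alpha$. For the reverse inequality, I would use that the lower bound holds for \emph{all} large $n$, so $f_n$ can never be $o(n^\alpha\gamma^n)$. This rules out $\beta<\alpha$, since then $f_n = O(n^\beta\gamma^n) = o(n^\alpha\gamma^n)$ uniformly. An alternative route is to note that $n^{-k}$ behaviour with $k\ge1$ integer corresponds to logarithmic singular terms $(1-z/\zeta)^{k-1}\log(1-z/\zeta)$, which are excluded for algebraic functions by the Puiseux theorem.
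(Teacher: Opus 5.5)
The paper gives no proof of this lemma; it cites Bóna and credits the argument to Bostan. Your argument is correct and is essentially that standard one: Flajolet--Sedgewick's algebraic asymptotics theorem gives $f_n=\gamma^n n^{\beta}\bigl(\sum_j C_j\omega_j^n+o(1)\bigr)$ with $\beta\notin\{-1,-2,\dots\}$ and the $C_j$ not all zero, and the two-sided bounds then force $\beta=\alpha$.

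One slip: in your second paragraph the roles are swapped. The positive-density lower bound on $\bigl|\sum_j C_j\omega_j^n\bigr|$ combined with the \emph{upper} bound gives $\beta\le\alpha$, while $\beta\ge\alpha$ follows from $f_n=O(\gamma^n n^{\beta})$ and the \emph{lower} bound; your final paragraph states this correctly. Also, the closing ``alternative route'' via logarithmic terms is not a proof on its own, since coefficient bounds alone do not determine the singular expansion.
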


\vspace{10pt}
This second lemma is well-known.
\begin{lemma}
    Let $f(z) = \sum_{n\geq 0} f_n z^n$ be a power series with nonnegative real coefficients that is analytic at the origin. Assume that $c,C,\gamma$, and $\alpha$ exist so that $\alpha$ is not a non-negative integer and for all (sufficiently large) positive integers $n$ the chain of inequalities
    \[
    cn^\alpha \gamma^n \leq f_n \leq Cn^\alpha \gamma^n
    \]
    holds. Then $f(z)$ is not a rational power series.
\end{lemma}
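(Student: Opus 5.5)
The statement is the classical fact that a rational power series has coefficients governed by its poles. I would argue by contradiction: suppose $f(z)=P(z)/Q(z)$ with $P,Q$ polynomials and the fraction in lowest terms. Since the $f_n$ are nonnegative and eventually comparable to $n^\alpha\gamma^n$, the radius of convergence is exactly $\rho=1/\gamma$. Here I assume $\gamma>0$, which is the case of interest; if $\gamma=0$ the hypothesis forces $f_n=0$ eventually, so $f$ is a polynomial, and I would either exclude this or note that the inequality with $c>0$ is then vacuous. By Pringsheim's theorem, since the coefficients are nonnegative, $z=\rho$ is a singularity of $f$, hence a pole of some order $d\ge 1$.

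Next I would use partial fractions. We can write
\[
f_n=\sum_{j}\pi_j(n)\,\omega_j^{-n}
\]
for all sufficiently large $n$, where the $\omega_j$ are the poles and each $\pi_j$ is a polynomial of degree one less than the order of the pole $\omega_j$. Restrict to the poles on the circle $|z|=\rho$, and let $D$ be the maximal order among them. The remaining poles contribute $O(\tau^n)$ for some $\tau<\gamma$. Hence
\[
f_n\gamma^{-n}=n^{D-1}\sum_{|\omega_j|=\rho,\ \mathrm{ord}=D}c_j e^{-in\theta_j}+O(n^{D-2}),
\]
where the $c_j$ are nonzero constants.

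The key step is to show that the trigonometric sum $u_n=\sum c_je^{-in\theta_j}$ cannot tend to $0$. This is the standard fact that a nontrivial finite exponential sum with distinct frequencies satisfies $\limsup|u_n|>0$. One proof: $\frac1N\sum_{n<N}|u_n|^2\to\sum|c_j|^2>0$, because the cross terms average to zero. So along a subsequence $f_n\gamma^{-n}\ge \delta n^{D-1}$. Combined with the upper bound $f_n\le Cn^\alpha\gamma^n$, this gives $D-1\le\alpha$.

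For the reverse inequality I would use $|u_n|\le\sum|c_j|$, which gives $f_n\gamma^{-n}=O(n^{D-1})$. Combined with the lower bound $cn^\alpha\le f_n\gamma^{-n}$, this gives $\alpha\le D-1$. Therefore $\alpha=D-1$ is a nonnegative integer, which contradicts the hypothesis.

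The main obstacle is the lower-bound comparison. Its subtle point is exactly the non-vanishing of the oscillating sum. Pringsheim guarantees that $\rho$ itself is a pole, but a pole at $\rho$ might not have the maximal order $D$, so I would use the mean-square argument above rather than isolating the pole at $\rho$. Everything else is routine partial-fraction bookkeeping.
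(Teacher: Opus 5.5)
The paper gives no proof of this lemma; it only calls it well known. So there is no argument of the paper's to compare yours against.

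Your proof is correct and complete. Partial fractions give $f_n\gamma^{-n}=n^{D-1}u_n+O(n^{D-2})$, where $u_n$ is a nontrivial exponential sum with distinct frequencies. Boundedness of $u_n$ together with the lower bound gives $\alpha\le D-1$. The mean-square limit $\frac1N\sum_{n<N}|u_n|^2\to\sum|c_j|^2>0$ together with the upper bound gives $D-1\le\alpha$.

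\textbf{Hypotheses.} Your aside on $\gamma=0$ is slightly muddled. In that case the inequality is not vacuous: it forces $f$ to be a polynomial, so the lemma is simply false there. It is likewise false for $c\le 0$; for example, $f(z)=1/(1-z)$ with $\gamma=2$ and $\alpha=-1$ satisfies the bounds. The right move is to read $c>0$ and $\gamma>0$ into the statement, which is in effect what you do.

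\textbf{Order of the pole at $\rho$.} Your remark that the pole at $\rho$ ``might not have the maximal order $D$'' is wrong under the nonnegativity hypothesis. Since $|f(re^{i\theta})|\le f(r)$ for $0<r<\rho$, letting $r\to\rho^-$ shows that no pole on $|z|=\rho$ has larger order than the pole at $\rho$. This does not affect your logic.

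\textbf{A shorter route.} This fact points to the usual Abelian argument. Compare $f(r)\sim K(\rho-r)^{-d}$ as $r\to\rho^-$ with $\sum_n n^\alpha(\gamma r)^n$. That sum is bounded for $\alpha<-1$, of order $\log\frac{1}{\rho-r}$ for $\alpha=-1$, and of order $(\rho-r)^{-(\alpha+1)}$ for $\alpha>-1$. The two-sided coefficient bound then forces $\alpha+1=d\ge 1$.

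That route is shorter, but it genuinely needs positivity and Pringsheim. Your argument never actually uses either; Pringsheim plays no role in your logic. So your proof establishes the stronger statement that any rational series with $|f_n|\asymp n^\alpha\gamma^n$ has $\alpha\in\mathbb{Z}_{\ge 0}$.
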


We also need the following known results.

\begin{theorem}[Regev 1981 \cite{Regev}]
    For all $k\geq 2$ there is $C_k$ such that
    \[
    |S_n(k(k-1)...1)| \sim C_k \frac{(k-1)^{2n}}{n^{(k^2-2k)/2}}.
    \]
\end{theorem}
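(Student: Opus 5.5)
The plan is the classical route through the RSK correspondence, which reduces counting $S_n(k(k-1)\cdots 1)$ to counting pairs of standard Young tableaux. First, by the Robinson--Schensted correspondence, a permutation $p\in S_n$ corresponds bijectively to a pair $(P,Q)$ of standard Young tableaux of the same shape $\lambda\vdash n$. By Schensted's theorem, the length of the longest decreasing subsequence of $p$ equals the number of rows of $\lambda$. Hence
\[
|S_n(k(k-1)\cdots 1)| = \sum_{\lambda\vdash n,\ \ell(\lambda)\le k-1} (f^\lambda)^2,
\]
where $f^\lambda$ is the number of standard Young tableaux of shape $\lambda$. This turns the problem into an asymptotic evaluation of a sum over partitions with at most $k-1$ rows.

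Next, I would write $f^\lambda$ by the hook length formula in the form
\[
f^\lambda = \frac{n!}{\prod_i (\lambda_i+k-1-i)!}\prod_{i<j}(\lambda_i-\lambda_j+j-i),
\]
valid for partitions with at most $k-1$ rows. The sum is dominated by shapes near the balanced shape $\lambda_i\approx n/(k-1)$. So I would set $\lambda_i = n/(k-1) + x_i\sqrt{n}$ with $\sum_i x_i=0$ and apply Stirling's formula. In these coordinates the multinomial factor $n!/\prod\lambda_i!$ behaves like
\[
(k-1)^{n}\, n^{-(k-2)/2}\, e^{-\frac{k-1}{2}\sum x_i^2}
\]
up to constants. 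The Vandermonde factor $\prod_{i<j}(\lambda_i-\lambda_j)$ contributes $n^{\binom{k-1}{2}/2}\prod_{i<j}|x_i-x_j|$. The shift from $\lambda_i!$ to $(\lambda_i+k-1-i)!$ contributes a power of $n$ that is polynomial and explicit.

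Squaring this approximation and replacing the sum over the $(k-2)$-dimensional lattice of $\lambda$ by an integral (a Jacobian factor of $n^{(k-2)/2}$) gives
\[
C_k\,(k-1)^{2n}\, n^{-e}, \qquad C_k \propto \int_{\sum x_i=0} \prod_{i<j}(x_i-x_j)^2\, e^{-(k-1)\sum x_i^2}\,dx.
\]
The integral is a Mehta--Selberg type Gaussian integral, so it is finite and positive, and bookkeeping the powers of $n$ should produce $e=(k^2-2k)/2$.

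The main obstacle will be justifying the passage from the sum to the integral. One part is the local central limit approximation, uniform on the region $|x_i|\le n^{\epsilon}$. The other part is showing that the tails, meaning shapes far from balanced and shapes near the boundary where rows are nearly equal, contribute negligibly. For the tails I would bound $f^\lambda$ crudely by the multinomial coefficient times $n^{\binom{k-1}{2}}$ and use Gaussian decay of the multinomial via a Chernoff bound. For the bulk I would use a uniform Stirling error. The exponent count is routine but error-prone, so I would check it against $k=3$, where the sum is the Catalan number $\sim 4^n/(\sqrt{\pi}n^{3/2})$ and the claimed exponent is $3/2$.
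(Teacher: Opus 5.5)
The paper gives no proof of this statement. It quotes Regev's 1981 result and uses it only as a black-box input, together with Backelin--West--Xin and Theorem 4.1, for the asymptotics of $|S_{n,r}(23\cdots k1)|$. Your proposal is correct, and it is essentially Regev's own argument. Schensted's theorem reduces the count to $\sum_{\ell(\lambda)\le k-1}(f^\lambda)^2$. The Frobenius form of the hook length formula plus Stirling then gives a Gaussian times a squared Vandermonde, and the lattice sum becomes a Mehta-type integral.

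The exponent bookkeeping you deferred does come out right. Write $m=k-1$. The squared multinomial contributes $n^{-(m-1)}$ and the squared Vandermonde $n^{\binom{m}{2}}$. The squared factorial shifts $\lambda_i!/(\lambda_i+m-i)!$ contribute $n^{-2\binom{m}{2}}$, and the lattice-to-integral conversion $n^{(m-1)/2}$. The total is $n^{-(m^2-1)/2}=n^{-(k^2-2k)/2}$, consistent with your $k=3$ check.

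Your tail estimate also works. Since $(\lambda_i+m-i)!\ge\lambda_i!$, the crude bound $f^\lambda\le\binom{n}{\lambda}(n+m)^{\binom{m}{2}}$ holds. The entropy bound $\binom{n}{\lambda}\le m^n e^{-c\sum x_i^2}$ then kills the polynomial losses outside $\|x\|\le n^{\epsilon}$.

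Two minor refinements:
\begin{itemize}
\item The integration domain should be the chamber $x_1\ge\cdots\ge x_{k-1}$ inside $\sum x_i=0$. This only changes $C_k$ by a factor of $(k-1)!$.
\item Shapes with nearly equal rows need no separate treatment. The identity $\prod_{i<j}(\lambda_i-\lambda_j+j-i)=n^{\binom{m}{2}/2}\prod_{i<j}\bigl(x_i-x_j+(j-i)/\sqrt n\bigr)$ is exact, and the right side converges uniformly on compacts. Only the factorial approximations need uniform error control, and those hold throughout the bulk, where every $\lambda_i\approx n/m$.
\end{itemize}

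Regev additionally evaluates $C_k$ in closed form via Mehta's integral, but the statement as used here only needs its existence and positivity.
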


\begin{theorem}[Backelin, West, Xin 2007 \cite{BackelinWestXin}]
Let $k \geq 1$. Let $q$ be a permutation. Then
\[
|S_n((1 \ 2 \ ... \ k) \ominus q)| = |S_n((k  \ k-1 \ ...  \ 1 )
\ominus q)|
\]
\end{theorem}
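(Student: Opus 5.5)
Since this is a cited result, I would reconstruct the standard argument, which runs through \emph{shape-Wilf-equivalence} on Ferrers boards.

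The first step is a symmetry reduction. Taking complements sends $(1\,2\,\cdots\,k)\ominus q$ to $(k\,\cdots\,1)\oplus q^c$, and sends $(k\,\cdots\,1)\ominus q$ to $(1\,2\,\cdots\,k)\oplus q^c$. Complementation is a bijection on $S_n$ that preserves avoidance classes. It therefore suffices to prove, for an arbitrary permutation $q'$, that
\[
|S_n((1\,2\,\cdots\,k)\oplus q')| = |S_n((k\,\cdots\,1)\oplus q')|.
\]

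The second step is to pass to Ferrers boards. A full rook placement on a Ferrers board $F$ is a placement of rooks, one in each row and each column, lying inside $F$. Such a placement \emph{contains} a pattern $\alpha$ if some copy of $\alpha$ in it has its bounding rectangle entirely inside $F$. Call $\alpha$ and $\beta$ shape-Wilf-equivalent if, for every $F$, the same number of full rook placements avoid $\alpha$ and avoid $\beta$.

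I would then prove the Babson--West prefix lemma: if $\alpha$ and $\beta$ are shape-Wilf-equivalent, then so are $\alpha\oplus\gamma$ and $\beta\oplus\gamma$ for every $\gamma$. To do this, fix a placement on $F$ and mark each cell that is the lower-left corner of a region containing a copy of $\gamma$ whose bounding box lies inside $F$. The marked cells form a Ferrers sub-board $F'$. The rooks in $F'$ form a full rook placement of $F'$, and the placement avoids $\alpha\oplus\gamma$ exactly when this restricted placement avoids $\alpha$ on $F'$. Replacing the restricted placement via the $\alpha$/$\beta$ bijection on $F'$ leaves the rooks outside $F'$ unchanged. It must be checked that $F'$ is unchanged by this replacement, which holds because $F'$ is determined by the rooks outside it together with the row and column sets they occupy. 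Finally, $S_n$ is the special case where $F$ is the $n\times n$ square, so the lemma reduces everything to the third step below.

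The third step, which I expect to be the main obstacle, is to show that $1\,2\,\cdots\,k$ and $k\,\cdots\,1$ are shape-Wilf-equivalent for every $k$. My first attempt would use growth diagrams, following Krattenthaler's approach. Label the corners of the Ferrers board by partitions via Fomin's local rules applied to the rook placement. By Greene's theorem, the first row of the partition at a corner is the length of the longest increasing chain in the rectangle below-left of that corner. The first column is the length of the longest decreasing chain. Avoiding $1\,2\,\cdots\,k$ means every corner on the outer boundary carries a partition with fewer than $k$ columns; avoiding $k\,\cdots\,1$ means fewer than $k$ rows. The growth diagram gives a bijection between full rook placements and sequences of partitions along the boundary, with oscillating steps. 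Conjugating every partition in such a sequence swaps the two conditions, which yields the bijection.

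The delicate point is that the labelling is defined on the whole board rather than just a rectangle. I would verify that Greene's interpretation holds for each rectangle inscribed in $F$ and that the boundary sequence determines the placement. If this route stalls, the fallback is Backelin--West--Xin's original approach: an explicit induction on $k$ using the transformations that interchange the patterns $12\cdots k$ and $(k-1)\cdots 1\,k$ while preserving the counts on each board, composed repeatedly to reach $k\,\cdots\,1$.
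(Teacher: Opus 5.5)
The paper gives no proof of this statement. It is quoted from Backelin--West--Xin and used only as a black box in Section 5, so the comparison has to be with the literature.

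Your architecture is the standard one and is correct. Complementation turns the $\ominus$ form into $(1\,2\cdots k)\oplus q^c$ versus $(k\cdots 1)\oplus q^c$, which is the direct-sum form of the Backelin--West--Xin theorem. The Babson--West prefix lemma then reduces this to shape-Wilf-equivalence of $1\,2\cdots k$ and $k\cdots 1$, and the square board recovers $S_n$.

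You depart from Backelin--West--Xin at the core step. They prove $1\,2\cdots k\sim k\cdots 1$ on Ferrers boards with an explicit transformation $\phi^*$, obtained by iterating a local move on full rook placements. You instead use Krattenthaler's growth-diagram argument. There, Greene's theorem reads the longest increasing and decreasing chains off the first row and first column of the boundary labels, and conjugating the boundary sequence swaps the two avoidance conditions. Your route handles all $k$ at once and is conceptually cleaner: the resulting bijection visibly commutes with transposing the board, which for $\phi^*$ required a long separate argument. The original avoids the RSK/Greene machinery but is considerably more intricate.

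For conjugation to give a bijection, you also need surjectivity of the Fomin correspondence, not only that the boundary sequence determines the placement. Every sequence from $\emptyset$ to $\emptyset$ that adds a box at each east step and removes one at each south step must be realized by some placement. This follows by running the backward local rules inward from the boundary. They always produce $\rho\subseteq\mu\cap\nu$, which forces empty labels on the left and bottom edges.

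Two details need repair; neither is fatal.

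First, in the prefix lemma the rooks inside the marked board $F'$ are in general not a full rook placement of $F'$. Take $\gamma=1$ on the $3\times 3$ board with rooks in cells $(1,1),(2,3),(3,2)$ (column, row). The marked cells are $(1,1),(1,2),(2,1)$, yet row $2$ and column $2$ have their rooks outside $F'$. You must first delete every row and column occupied by an unmarked rook. What remains of $F'$ is a Ferrers board carrying a full placement, and the $\alpha/\beta$ bijection is applied there.

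The invariance of $F'$ also needs its short argument. Suppose a copy of $\gamma$ north-east of a cell uses a marked rook. Then some copy of $\gamma$ lies strictly north-east of that rook. Iterating produces a copy made only of unmarked rooks; this terminates because the minimum of row plus column over the copy strictly increases. Conversely, copies in the new placement that use replaced rooks cannot enlarge $F'$, since those rooks lie in $F'$ and $F'$ is closed under moving down and left.

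Second, your fallback would not close an induction as described. The equivalence $1\,2\cdots k\sim(k-1)\cdots 1\,k$ is already immediate from the inductive hypothesis and the prefix lemma, because $1\,2\cdots k=(1\,2\cdots(k-1))\oplus 1$. Meanwhile $k\cdots 1$ is $\oplus$-indecomposable and is never the output of a prefix replacement. So the new input any such induction needs is $(k-1)\cdots 1\,k\sim k\cdots 1$. Your main argument does not depend on the fallback.
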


Combining all of these with Theorem $4.1$, we obtain the following result.
\begin{theorem}
    Suppose $r\geq 1$. Then there is $c_k$ and $C_k$ such that
    \[
    c_k \frac{(k-1)^{2n}}{n^{((k^2-2k)/2) - r}} \leq |S_{n,r}(23...k1)| \leq C_k \frac{(k-1)^{2n}}{n^{((k^2-2k)/2) - r}}.
    \]
    Let $S_r(z) = \sum_{n\geq 0} |S_{n,r}(23...k1)| z^n$. If $k$ is odd, then $S_r(z)$ is not rational. If $k$ is even and $r < \frac{k^2-2k}{2}$, then $S_r(z)$ is not algebraic.
\end{theorem}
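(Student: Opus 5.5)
The plan is to apply Theorem 4.1 to $q = 23\ldots k1$ (with $k\geq 2$), transfer the Regev asymptotics to $S_n(q)$ via the Backelin--West--Xin theorem, and then read off the analytic consequences from Lemmas 5.1 and 5.2.

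\textbf{Checking the hypotheses of Theorem 4.1.} We have $23\ldots k1 = (12\ldots(k-1)) \ominus 1$, so $q_1 = 12\ldots(k-1)$ and $q_2 = 1$, with two skew blocks. Both blocks are skew-indecomposable: an increasing permutation is skew-indecomposable, and so is $1$. Since there are only two blocks, the condition that $q_2,\ldots,q_{k-1}$ are not $1$ is vacuous. The pattern $q$ is separable, being a skew sum of an increasing run and a point. It is also indecomposable: a nontrivial sum decomposition would need an initial segment whose values are exactly the smallest ones, but every initial segment $2,\ldots,j$ misses the value $1$, which occurs last. The length is at least $2$. Theorem 4.1 therefore gives $c\,n^r|S_n(q)| \leq |S_{n,r}(q)| \leq C\,n^r|S_n(q)|$.

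\textbf{Computing $|S_n(q)|$.} Apply Theorem 5.4 with the inner pattern equal to $1$ and with $k-1$ in place of $k$:
\[
|S_n((12\ldots(k-1))\ominus 1)| = |S_n(((k-1)\ldots 1)\ominus 1)| = |S_n(k(k-1)\ldots 1)|.
\]
Regev's theorem (Theorem 5.3) then gives $|S_n(q)| \sim C_k (k-1)^{2n} n^{-(k^2-2k)/2}$. This is bounded above and below by constant multiples of that expression for large $n$, and the constants can be adjusted to cover small $n$. Substituting into the bounds from Theorem 4.1 gives the displayed two-sided estimate with exponent $\alpha = r - (k^2-2k)/2$ and $\gamma = (k-1)^2$.

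\textbf{Analytic consequences.} If $k$ is odd, then $k^2-2k = k(k-2)$ is odd, so $(k^2-2k)/2$ is a half-integer. Hence $\alpha$ is not an integer, in particular not a nonnegative integer, and Lemma 5.2 shows $S_r(z)$ is not rational. If $k$ is even, then $(k^2-2k)/2$ is an integer, and $r < (k^2-2k)/2$ gives an integer $\alpha \leq -1$, so Lemma 5.1 shows $S_r(z)$ is not algebraic.

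The main obstacle is only bookkeeping. We must match the index conventions in Theorem 5.4, where its $k$ is our $k-1$ and its $q$ is $1$, and confirm indecomposability and skew-indecomposability of the blocks carefully. The edge case $k=2$ (the pattern $21$) also needs a check: there $\alpha = r \geq 1$ is a nonnegative integer, but $k$ is even and the hypothesis $r < 0$ fails, so no claim is made and the case is harmless.
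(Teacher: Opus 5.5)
Your proposal is correct and follows the route the paper takes when it says the result comes from combining Theorem 4.1 with Theorems 5.3--5.4 and Lemmas 5.1--5.2: you check that $23\ldots k1=(12\ldots(k-1))\ominus 1$ satisfies the hypotheses of Theorem 4.1, transfer Regev's asymptotics through Backelin--West--Xin, and use the parity of $k(k-2)$ to decide which lemma applies. Your explicit hypothesis checks and the $k=2$ remark fill in details the paper leaves unstated; the only caveat, which the paper's statement shares, is that the lower bound can hold only for sufficiently large $n$ (since $|S_{n,r}(q)|=0$ for small $n$), and that is all Lemmas 5.1 and 5.2 require.
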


The following results are also well-known:
\begin{theorem}
    For all $n\geq 1$ we have
    \[
    |S_n(3412)| = |S_n(4321)|.
    \]
\end{theorem}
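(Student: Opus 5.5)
The aim is to show $|S_n(3412)| = |S_n(4321)|$ for every $n\geq 1$. I would obtain this as a special case of the Backelin--West--Xin theorem (Theorem 5.4), combined with two symmetry operations that preserve avoidance counts.

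First, write both patterns as skew sums. We have $4321 = (2\ 1) \ominus (2\ 1)$: the first two entries $4,3$ lie above the last two $2,1$, and each block has the relative order $21$. Likewise $3412 = (1\ 2) \ominus (1\ 2)$: the block $34$ lies above the block $12$, and each block is increasing. Applying Theorem 5.4 with $k=2$ and $q = 12$ gives
\[
|S_n((1\ 2)\ominus(1\ 2))| = |S_n((2\ 1)\ominus(1\ 2))|,
\]
that is, $|S_n(3412)| = |S_n(4312)|$. Applying Theorem 5.4 again with $k=2$ and $q=21$ gives $|S_n(3421)| = |S_n(4321)|$.

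The remaining step is to link $4312$ with $3421$, and the natural tool is a dihedral symmetry of the square. Reverse-complement fixes the pattern $3412$ itself, so it gives nothing new. Inverse is more useful: the inverse of $4312$ is $3421$. To check this, note that $4312$ sends $1\mapsto4$, $2\mapsto3$, $3\mapsto1$, $4\mapsto2$. Inverting gives $1\mapsto3$, $2\mapsto4$, $3\mapsto2$, $4\mapsto1$, which is $3421$.

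Since $p \mapsto p^{-1}$ is a bijection on $S_n$ and $p$ avoids $q$ exactly when $p^{-1}$ avoids $q^{-1}$, we get $|S_n(4312)| = |S_n(3421)|$. Chaining the three equalities yields
\[
|S_n(3412)| = |S_n(4312)| = |S_n(3421)| = |S_n(4321)|.
\]

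I do not expect a real obstacle, since everything reduces to Theorem 5.4 plus a symmetry. The only delicate point is the direction convention in Theorem 5.4: it replaces an increasing prefix block sitting above $q$ by a decreasing one. This must be applied to the top block, which is exactly how $3412$ and $4321$ decompose, and I have checked that each decomposition has that form.
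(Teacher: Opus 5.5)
Your proof is correct. The decompositions $3412=(12)\ominus(12)$, $4312=(21)\ominus(12)$, $3421=(12)\ominus(21)$ and $4321=(21)\ominus(21)$ are right. Theorem 5.4 with $k=2$ then gives the two outer equalities, and $4312^{-1}=3421$ closes the chain.

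The paper itself gives no argument for this statement; it simply lists it as ``well-known.'' It is the classical fact that $2143$, the complement of $3412$, lies in the Wilf class of $1234$. So your route is less a different proof than a proof where the paper has none. Its virtue is that it is self-contained relative to the paper: it uses only Theorem 5.4, which the paper already quotes, plus one symmetry of the square.

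Two minor remarks. First, reverse-complement also sends $4312$ to $3421$, so it would serve equally well in place of inversion. Second, the paper's skew-sum form of Theorem 5.4 is the complement of Backelin--West--Xin's original direct-sum statement, that $12\cdots k\oplus\tau$ and $k\cdots 21\oplus\tau$ are Wilf-equivalent for every $\tau$. This holds because $(\alpha\oplus\beta)^c=\alpha^c\ominus\beta^c$ and $\tau\mapsto\tau^c$ is a bijection, so your reliance on the theorem as stated is legitimate.
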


\begin{theorem}[Bóna 1997 \cite{Bona1997b}]
    Let $S(z)$ be the ordinary generating function for $S_n(1342)$. Then
    \[
    S(z) = \frac{32z}{-8z^2 + 12z + 1 - (1-8z)^{3/2}}
    \]
    and
    \[
    S_n(1342) \sim C \frac{8^n}{n^{5/2}}
    \]
    for a (known) constant $C$.
\end{theorem}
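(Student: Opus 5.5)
The plan is to follow Bóna's original strategy. First reduce to a class of "indecomposable'' objects whose enumeration is classical. Then get the generating function by a substitution, and finally read off the asymptotics by singularity analysis. The formula $S(z)$ and the rate $8^n n^{-5/2}$ both point to planar-map-type objects, since rooted bicubic (equivalently, rooted Eulerian) planar maps have the algebraic generating function with a $(1-8z)^{3/2}$ singularity.

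\emph{Step 1 (decomposition).} Since $1342 = 1\oplus 231$, I would decompose $p\in S_n(1342)$ by its leftmost entry. Write $p$ as $p_1$ followed by the rest. The entries larger than $p_1$ to its right must avoid $231$, and this constrains how they interleave with the smaller entries. The cleaner route, which I would commit to, is Bóna's: encode each $1342$-avoider by a labelled plane tree. Concretely, these are $\beta(0,1)$-trees: rooted plane trees with integer labels, where leaves have label $1$ (or $0$), and each internal vertex's label is at most the sum of its children's labels plus the prescribed offset. The encoding is recursive. Removing the maximal entry $n$ splits $p$ into the part left of $n$ and the part right of $n$. Avoidance of $1342$ forces strong restrictions, since every entry right of $n$ that is larger than some earlier entry must form no $342$ with it. These restrictions translate exactly into the label constraint. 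I would verify that the map is a bijection by giving the inverse insertion procedure.

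\emph{Step 2 (counting the trees).} $\beta(1,0)$-trees are known (Cori–Jacquard–Schaeffer) to be in bijection with rooted nonseparable planar maps. Consequently the indecomposable pieces are counted by $\frac{3\cdot 2^{n-1}(2n)!}{n!(n+2)!}$. I would take that count of rooted bicubic maps as given, with generating function $I(z)$ algebraic of degree $2$. Then I would express $S(z)$ through $I(z)$. The natural relation is $S = 1/(1-I)$ when a sum-type decomposition applies, or an analogous compositional substitution dictated by Step 1. Simplifying with the explicit radical should produce
\[
S(z)=\frac{32z}{-8z^2+12z+1-(1-8z)^{3/2}}.
\]
As a sanity check, I would confirm that the expansion gives $1,1,2,6,23,103,\dots$.

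\emph{Step 3 (asymptotics).} The only singularity of the numerator and radical on the circle of convergence is the branch point $z=1/8$. The denominator there equals $-\tfrac18+\tfrac32+1=\tfrac{19}{8}\neq 0$. I must also check that the denominator has no zero in $|z|\le 1/8$. Positivity of coefficients means the dominant singularity is on the positive axis, so it suffices to check that $D(x)=-8x^2+12x+1-(1-8x)^{3/2}$ does not vanish on $[0,1/8]$. This holds since $D(0)=0$ is cancelled by the numerator $32z$, and $D$ is increasing on that interval. Expanding $S(z)$ near $1/8$ then gives $S(z)=A+B(1-8z)+E(1-8z)^{3/2}+\dots$. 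The transfer theorem yields $[z^n]S\sim \frac{E}{\Gamma(-3/2)}\,8^n n^{-5/2}$, with the constant $C$ explicit.

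The main obstacle is Step 1. Designing the bijection and proving that the label condition captures exactly $1342$-avoidance is delicate bookkeeping. Steps 2 and 3 are routine once the correct tree class is identified.
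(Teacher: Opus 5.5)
\textbf{Gap: Step 2 cannot produce the displayed formula, because that formula is a misprint.} The paper gives no proof here and simply cites B\'ona. Your outline (indecomposable avoiders $\leftrightarrow$ $\beta(0,1)$-trees $\leftrightarrow$ bicubic maps, then $S=1/(1-I)$, then singularity analysis) is B\'ona's route. However, the step you call routine fails as stated. Since $(1-8z)^{3/2}=1-12z+24z^2+32z^3+96z^4+\cdots$, the printed denominator is
\[
-8z^2+12z+1-(1-8z)^{3/2}=24z-32z^2-\cdots .
\]
So the printed $S(z)$ has constant term $32/24=4/3$, and your proposed sanity check would fail at once. B\'ona's generating function is
\[
S(z)=\frac{32z}{1+20z-8z^2-(1-8z)^{3/2}},
\]
whose denominator is $32z(1-z-z^2-3z^3-12z^4-\cdots)$. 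No simplification can produce the printed expression. Two consequences for Step 3:
\begin{itemize}
\item The denominator at $z=1/8$ is $27/8$, not $19/8$.
\item With $t=\sqrt{1-8z}$ one gets $S=32(1-t^2)/(27-18t^2-8t^3-t^4)$. The $t^3$ coefficient is $256/729$, so $C=64/(243\sqrt{\pi})$.
\end{itemize}
Only the form $8^n n^{-5/2}$ survives unchanged.

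\textbf{Step 2 needs further repair.}
\begin{itemize}
\item You conflate two tree classes. $\beta(1,0)$-trees correspond to rooted nonseparable maps, counted by $2(3n)!/((n+1)!(2n+1)!)$ (i.e.\ $1,2,6,22,\dots$), not by the bicubic formula you quote. The correspondence you need is $\beta(0,1)$-trees $\leftrightarrow$ rooted bicubic maps.
\item There is an index shift. With $b_m=\frac{3\cdot 2^{m-1}(2m)!}{m!(m+2)!}$, the indecomposable avoiders of length $n\ge 2$ number $b_{n-1}$, giving $1,1,3,12,56,\dots$.
\item The decomposition must be the \emph{skew} sum. Because $1342=1\oplus 231$ is sum-decomposable but skew-indecomposable, $\alpha\ominus\beta$ avoids $1342$ iff both factors do. A direct-sum decomposition does not give $1/(1-I)$. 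The skew sum gives $S=1/(1-I)$ with
\[
I(z)=z+z\sum_{m\ge1}b_mz^m=\frac{(1-8z)^{3/2}-1+12z+8z^2}{32z},
\]
and $1/(1-I)$ is the $20z$ formula above.
\end{itemize}

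\textbf{Zeros of the denominator on $|z|=1/8$.} Checking the real segment is not enough. Use instead $|I(z)|\le I(1/8)=5/32<1$ on the closed disk.

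\textbf{Step 1 is a citation, not a proof.} It is the entire content of B\'ona's theorem. Your sketch (removing $n$, plus a label rule that omits the root condition) does not determine a bijection.
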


Taking these together with Theorem $4.1$, we obtain the following results.

\begin{theorem}
    Suppose $r\geq 1$. Then there is $c$ and $C$ such that
    \[
    c \frac{9^{n}}{n^{4 - r}} \leq |S_{n,r}(3412)| \leq C \frac{9^{n}}{n^{4 - r}}.
    \]
    Let $S(z) = \sum_{n\geq 0} |S_{n,r}(3412)| z^n$. If $r < 4$, then $S(z)$ is not algebraic.
\end{theorem}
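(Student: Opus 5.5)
The plan is to check that $3412$ satisfies the hypotheses of Theorem 4.1, then transfer the known asymptotics of $S_n(4321)$ to $S_n(3412)$, and finally apply Lemma 5.1.

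\textbf{Hypotheses of Theorem 4.1.} Write $3412 = 12 \ominus 12$, so $k=2$ with $q_1 = q_2 = 12$. Each $12$ is skew-indecomposable. Since $k=2$, the condition that $q_2,\dots,q_{k-1}$ are not $1$ is vacuous. Next, $3412$ is indecomposable: no proper prefix of $3412$ consists of the values $\{1,\dots,j\}$, because the first entry is $3$ and the first two entries are $3,4$. Finally, $3412$ is separable, being built from $1$'s by sums and skew sums, and it has length at least $2$. Theorem 4.1 therefore gives constants $k', K$ with
\[
k' n^r |S_n(3412)| \le |S_{n,r}(3412)| \le K n^r |S_n(3412)| .
\]

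\textbf{Asymptotics of $S_n(3412)$.} By Theorem 5.6, $|S_n(3412)| = |S_n(4321)|$. By Regev's theorem (Theorem 5.3) with $k=4$, $|S_n(4321)| \sim C_4\, 9^n / n^{4}$, since $(k-1)^2 = 9$ and $(k^2-2k)/2 = 4$. For large $n$ this quantity lies between two positive constant multiples of $9^n n^{-4}$. Substituting into the previous display gives
\[
c\,\frac{9^n}{n^{4-r}} \le |S_{n,r}(3412)| \le C\,\frac{9^n}{n^{4-r}}
\]
for all sufficiently large $n$. The constants can be adjusted to cover the finitely many small $n$, or we simply read the bound as holding for sufficiently large $n$, which is all Lemma 5.1 needs.

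\textbf{Nonalgebraicity.} Take $\alpha = r-4$ and $\gamma = 9$. If $r < 4$, then $\alpha \le -1$ is an integer. The coefficients are nonnegative, and the series is analytic at the origin because its coefficients are bounded by $n!$; indeed the upper bound already gives radius of convergence at least $1/9$. Lemma 5.1 then says $S(z)$ is not algebraic.

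\textbf{Main obstacle.} The only nonroutine step is the hypothesis check, specifically confirming that $3412$ is indecomposable and that its skew decomposition has the required form; both are immediate here. Everything else is direct substitution into results already stated.
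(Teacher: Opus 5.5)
Your proposal is correct and follows the paper's route: check that $3412 = 12 \ominus 12$ satisfies the hypotheses of Theorem 4.1, transfer Regev's $k=4$ asymptotics through $|S_n(3412)| = |S_n(4321)|$, and apply Lemma 5.1 with $\gamma = 9$ and $\alpha = r-4$. Two small slips, neither of which affects the argument: a bound by $n!$ does not give analyticity at the origin (it is your $9^n n^{r-4}$ upper bound that does), and the lower bound cannot be extended to small $n$ by adjusting constants, since $|S_{n,r}(3412)| = 0$ for $n<4$, so the ``sufficiently large $n$'' reading is the right one.
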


\begin{theorem}
    Suppose $r\geq 1$. Then there is $c$ and $C$ such that
    \[
    c \frac{8^{n}}{n^{5/2 - r}} \leq |S_{n,r}(4213)| \leq C \frac{8^{n}}{n^{5/2 - r}}.
    \]
    Let $S(z) = \sum_{n\geq 0} |S_{n,r}(4213)| z^n$. Then $S(z)$ is not rational.
\end{theorem}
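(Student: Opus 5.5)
The plan is to verify that $q = 4213$ satisfies the hypotheses of Theorem 4.1. Then I transfer the known asymptotics for $1342$ to $4213$ by a symmetry, and finish with Lemma 5.2.

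First, the structural hypotheses. We have $4213 = 1 \ominus 213$, since the entry $4$ lies above and to the left of the block $213$. So we take $k=2$, $q_1 = 1$, $q_2 = 213$. Both pieces are skew-indecomposable: $1$ trivially, and $213 = 21 \oplus 1$ is a direct sum, so it cannot also be a skew sum of two nonempty pieces. Because $k=2$, the condition that $q_2,\dots,q_{k-1}$ are not $1$ is vacuous. For indecomposability, no proper prefix of $4213$ has value set $\{1,\dots,j\}$: the prefixes have value sets $\{4\}$, $\{2,4\}$, $\{1,2,4\}$. For separability, $4213$ is built from $1$ by direct and skew sums, namely $1 \ominus ((1\ominus 1)\oplus 1)$; equivalently it is neither $2413$ nor $3142$. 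So Theorem 4.1 applies and gives constants $k_0, K$ with
\[
k_0 n^r |S_n(4213)| \leq |S_{n,r}(4213)| \leq K n^r |S_n(4213)|.
\]

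Next, the asymptotics of $|S_n(4213)|$. The complement of $4213$ is $1342$, since $5-4,5-2,5-1,5-3 = 1,3,4,2$. Complementation is a bijection $S_n(4213)\to S_n(1342)$, so $|S_n(4213)| = |S_n(1342)| \sim C\,8^n/n^{5/2}$ by Theorem 5.7 (Bóna). Hence for large $n$ we have $\tfrac{C}{2}\,8^n n^{-5/2} \le |S_n(4213)| \le 2C\,8^n n^{-5/2}$. Substituting into the display above gives the claimed two-sided bound $c\,8^n/n^{5/2-r} \le |S_{n,r}(4213)| \le C'\,8^n/n^{5/2-r}$. For small $n$ the constants can be adjusted, or we simply use the "sufficiently large $n$" form.

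Finally, nonrationality. The coefficients of $S(z)$ are nonnegative, and $S(z)$ is analytic at the origin because its coefficients grow at most like $8^n n^{r}$. It satisfies $c\,n^{\alpha}8^n \le f_n \le C' n^{\alpha} 8^n$ with $\alpha = r - 5/2$. This exponent is a half-integer, hence never a nonnegative integer, so Lemma 5.2 shows that $S(z)$ is not rational.

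There is no real obstacle here. The only points requiring care are checking the decomposition hypotheses, in particular the skew-indecomposability of $213$ and the separability of $4213$, and noticing that complementation, not reversal, is the symmetry linking $4213$ to $1342$.
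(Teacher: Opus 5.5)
Your proposal is correct and follows the paper's route exactly: you verify that $4213 = 1 \ominus 213$ satisfies the hypotheses of Theorem 4.1, transfer Bóna's asymptotics for $1342$ to $4213$ via complementation, and apply Lemma 5.2 with the half-integer exponent $\alpha = r - 5/2$. Your explicit checks of indecomposability, skew-indecomposability of $213$, and separability, together with restricting the lower bound to sufficiently large $n$, fill in details the paper leaves implicit.
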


\section{Future directions}

Say that $p$ and $p'$ are in the same \textit{effective Wilf class} if for all $n,r$ it is true that $S_{n,r}(p) = S_{n,r}(p')$. For patterns of length $4$ it is known that there are $7$ different effective Wilf classes, with representatives $4321, 4312, 4123, 3412, 4231, \\ 4213$, and $3142$.

The following conjecture appears in \cite{Conway} and is supported in that paper by substantial numerical evidence.

\begin{conjecture}[Conway and Guttmann 2023 \cite{Conway}]
     Let $q\in \{4321, 4312\}$. Then there is a constant $K$ so that
     \[
     S_{n,r}(q) \sim KS_{n}(q).
     \]
     Let $q\in \{4123, 3412, 4231, 4213, 3142\}$. Then there is a constant $K$ so that
     \[
     S_{n,r}(q) \sim Kn^rS_{n}(q).
     \]
\end{conjecture}

A result from our previous work \cite{Waite2025} supports this conjecture for $q = 4321$, and our new results support this conjecture for $q \in \{4123, 3412, 4231, 4213 \}$. A different argument which does not generalize very well can be used to obtain a similar result, supporting the conjecture for $q = 3142$. 

However, none of the techniques we have used have applied for $q = 4312$ and none of our attempts to obtain a similar result for this final case have worked. We hope to find an answer for this case in  future work, and perhaps extend this to another large set of patterns as in this paper.

\end{document}